\documentclass[preprint,amsmath,amssymb,aps,prl,superscriptaddress,nofootinbib,floatfix]{revtex4-2}

\usepackage{graphicx}
\usepackage{dcolumn}
\usepackage{bm}
\usepackage{mathrsfs}
\usepackage{amsmath}
\usepackage{hyperref}
\usepackage{latexsym}
\usepackage{multirow}
\usepackage{booktabs}
\usepackage{microtype}
\usepackage{algorithm}
\usepackage{algpseudocode}
\usepackage{float}
\usepackage{tabularx}
\usepackage{array}
\usepackage{xcolor}

\newcolumntype{C}{>{\centering\arraybackslash}X}   % 仅当确实要用 tabularx 时才需要

\usepackage{tikz}
\usetikzlibrary{patterns,decorations.pathmorphing,shapes}
\hypersetup{colorlinks=true, linkcolor=blue, urlcolor=blue, citecolor=magenta}

\usepackage[T1]{fontenc}
\usepackage[utf8]{inputenc}
\usepackage{lmodern,microtype}
\usepackage{amsmath,amssymb,amsfonts,amsthm,mathtools,bm}
\usepackage{booktabs,array,enumitem,xcolor,hyperref}%,geometry}
\hypersetup{colorlinks=true,linkcolor=blue!55!black,citecolor=blue!55!black,urlcolor=blue!55!black}
\allowdisplaybreaks
\newtheorem{theorem}{Theorem}[section]
\newtheorem{proposition}[theorem]{Proposition}
\newtheorem{lemma}[theorem]{Lemma}

\newtheorem{remark}[theorem]{Remark}
\newcommand{\dd}{\,\mathrm d}
\newcommand{\E}{\mathcal E}
\newcommand{\CH}{\mathrm{CH}}
\newcommand{\R}{\mathbb R}
\newcommand{\ack}[1]{\section*{Acknowledgments}#1}
\newcommand{\funding}[1]{%
  \section*{Funding}#1%
}
\newcommand{\data}[1]{%
  \section*{Data Availability Statement}#1%
}

\begin{document}

\title{The Naito--Okuda--Ouyang Biconcave Red Blood Cell Profile as a Finite-Energy Unbranched Weak Immersion with Point-Force Residues}
\author{Hao Wu}
\email{wuhao@ucas.ac.cn}
\affiliation{Zhejiang Key Laboratory of Soft Matter Biomedical Materials, Wenzhou Institute, 
University of Chinese Academy of Sciences, Wenzhou, Zhejiang 325000, China}

\date{\today}

\begin{abstract}
The axisymmetric profile \(\sin\psi(\rho)=c_0\rho\log(\rho/\rho_B)\) introduced by Naito, Okuda, and Ouyang is an exact solution of the unconstrained Helfrich shape equation for every \(\rho>0\), but its mean curvature diverges logarithmically at the dimple center. A covariant local analysis shows that in Cartesian graph coordinates the immersion is nondegenerate and belongs to \(W^{2,p}\) for every finite \(p\); it is \(C^{1,\alpha}\) for every \(\alpha<1\), but neither \(C^{1,1}\) nor \(C^2\). The conformal factor has a finite nonzero limit, so the point has branch multiplicity one and is not a branch point in the sense used in weak-immersion compactness theory. The Helfrich energy is finite, whereas the complete Euler--Lagrange operator carries the distributional residue \(\E_{\CH}=4\pi c_0\delta_p\) under the convention specified below. Hence the profile is a classical solution on the punctured surface and a finite-energy weak immersion, but it is not a free weak critical point of the unforced Canham--Helfrich functional. Stationarity is restored after adding the corresponding point-force potential or under a pinned-point constraint. Area and volume multipliers cannot cancel the Dirac residue. The apparent sign discrepancy between the shape equation used in the original papers and the modern convention is explained by the opposite definition of mean curvature. These results delimit precisely what can and cannot be inferred from modern existence and regularity theorems for Canham--Helfrich minimizers.
\end{abstract}

\maketitle

\section{Introduction}\label{sec:introduction}

The Canham--Helfrich curvature elasticity model \cite{Canham1970,Helfrich1973} has served as a central framework in membrane biophysics and soft condensed matter physics for over half a century. Early catalogues of vesicle shapes based on this model were compiled by Deuling and Helfrich \cite{Deuling1976a}. Subsequently, Seifert, Berndl, and Lipowsky \cite{Seifert1991} obtained a complete phase diagram for vesicle shape transformations within the spontaneous-curvature and bilayer-coupling models. The variational formulation of shapes of single component vesicles, developed in the early work of Svetina and \v{Z}ek\v{s} \cite{SvetinaZeks1989} and later systematized by Tu and Ou-Yang \cite{TuOuYang2004}, identifies equilibrium configurations as critical points of the bending energy subject to area and volume constraints. The first and second variations of this functional were derived in detail by Ou-Yang and Helfrich \cite{OuYangHelfrich1989}, and the resulting axisymmetric shape equation has since been studied extensively; a comprehensive review of configurations of single component fluid membranes can be found in Seifert \cite{Seifert1997}. Among the exact solutions of the spontaneous-curvature model, the prolate and oblate ellipsoids obtained by Liu et al.\ \cite{Liu1999} are particularly relevant as smooth reference shapes. Recently, a general variational framework of Helfrich free energy for multicomponent membranes has been developed for heterogeneous membrane-cortex composite layers \cite{Wu2018} and component-curvature coupling membranes \cite{Wu2025}. These developments are placed in a broader perspective in a recent review \cite{WuOuYang2026}.

Early applications of curvature elasticity to red blood cell shapes were carried out by Deuling and Helfrich \cite{Deuling1976b}. Within the framework for single component membranes, the explicit analytic biconcave solution for red blood cells discovered by Naito, Okuda, and Ouyang \cite{Naito1993,Naito1996} occupies a distinctive position. It reproduces the classical erythrocyte geometry \cite{Evans1972} and satisfies the free Helfrich shape equation at every regular point, as can be verified by direct substitution. The same equation can be derived from a stress-based point of view using the Noether identities for lipid membranes; see Capovilla and Guven \cite{CapovillaGuven2002}. However, the profile contains a logarithmic curvature divergence at each dimple center, and the question of whether such a singular configuration can be admitted as a weak solution has remained unsettled.

A rigorous assessment of a proposed shape requires evaluating its first variation on the complete admissible space, not merely substituting it into the smooth differential equation away from singular points. This distinction is decisive for the Naito--Okuda--Ouyang (NOO) biconcave profile. While the profile satisfies the free Helfrich equation on a punctured surface, a detailed geometric analysis shows that a nonzero stress flux survives around each dimple center. This flux manifests as the coefficient of a Dirac mass in the distributional shape equation, and it demands a localized point force. Related point singularities and force densities have been studied in the context of inverted catenoids and tethered membranes by Castro-Villarreal and Guven \cite{CastroGuven2007}, and force dipoles on fluid vesicles were analyzed by Guven and V\'azquez-Montejo \cite{GuvenVazquez2013}.

Furthermore, the classification of such singularities in modern geometric analysis has often been conflated with topological branch points. A finite-energy singular immersion need not be a branched immersion, and membership in a compact weak class does not imply global stationarity. The removability of point singularities for Willmore surfaces was investigated by Kuwert and Sch\"atzle \cite{KuwertSchaetzle2004}, while the weak immersion existence theory of Mondino and Scharrer \cite{MondinoScharrer2020}, together with the axisymmetric minimizer results of Choksi and Veneroni \cite{ChoksiVeneroni2013}, allows bubble trees and possibly branched limits for minimizing sequences. Lower semicontinuity properties relevant to this framework were established by Eichmann \cite{Eichmann2020}, and Li--Yau type inequalities for the Helfrich functional were obtained by Rupp and Scharrer \cite{RuppScharrer2023}. More recently, Kubin, Lussardi, and Morandotti \cite{KubinLussardiMorandotti2024} studied direct minimization on generalized Gauss graphs. These results do not automatically turn an isolated curvature singularity into a free critical point, nor do they make a nonzero Euler--Lagrange residue disappear.

The central result of this paper is the following precise classification. The logarithmic pole of the NOO profile is an unbranched, nondegenerate, finite-energy curvature singularity. The profile solves the free Helfrich equation strictly on the punctured surface. On the complete surface, it carries a quantifiable point-force residue and therefore cannot exist as an unforced free critical point.

The remainder of the paper is organized as follows. In Section~\ref{sec:methods} the geometric conventions are established, the first variation of the Canham--Helfrich functional is derived, and the axisymmetric formulation together with the exact interior calculation for the biconcave profile are presented. Section~\ref{sec:results} contains the main regularity and classification results, including the proof that the singularity is not a branch point and the distributional residue calculation. The main theorem is stated and proved. Section~\ref{sec:discussion} discusses the relation to weak immersion theory, compares with earlier parametrization analyses, and examines the physical interpretation and limitations of the profile as well as perspectives and applications. Concluding remarks are given in Section~\ref{sec:conclusion}.

\section{Methods: geometric formulation and exact calculation}\label{sec:methods}

\subsection{Conventions and first variation}\label{sec:variation}

\subsubsection{Geometric convention}
Let \(X:\Sigma\to\R^3\) be an oriented immersion with metric \(g_{ij}=\partial_iX\cdot\partial_jX\), unit normal \(n\), and second fundamental form
\[
h_{ij}=n\cdot\partial_{ij}X.
\]
The principal curvatures are \(c_1,c_2\), and
\[
2H=c_1+c_2=g^{ij}h_{ij},\qquad K=c_1c_2.
\]
With this convention, an outward-oriented sphere of radius \(R\) has \(c_1=c_2=-1/R\). The Laplace--Beltrami operator is defined by
\[
\Delta_g f=\operatorname{div}_g(\nabla_g f)=\frac1{\sqrt{\det g}}\partial_i\!\left(\sqrt{\det g}\,g^{ij}\partial_jf\right),
\]
so that in the Euclidean plane \(\Delta\log|x|=2\pi\delta_0\).

The bending energy is
\begin{equation}
\mathcal F_{\CH}[X]=\frac\kappa2\int_\Sigma(2H-c_0)^2\dd A.\label{eq:energy}
\end{equation}
The smooth Gauss--Bonnet identity is
\begin{equation}
\int_\Sigma K\dd A=2\pi\chi(\Sigma),\label{eq:GB}
\end{equation}
not \(4\pi\chi(\Sigma)\). For a sphere it gives \(4\pi\).

\subsubsection{Normal variation}
For the smooth calculation let \(X_t=X+t\eta n+o(t)\). At the nonsmooth center considered below, the same formula is obtained by working on punctured disks and using additive admissible variations \(X_t=X+tV\) with a smooth parameter-domain vector field \(V\); its scalar normal trace is \(\eta=V\cdot n\). Then
\begin{align}
\delta g_{ij}&=-2\eta h_{ij},&\delta g^{ij}&=2\eta h^{ij},\label{eq:metricvar}\\
\delta(\dd A)&=-2H\eta\dd A,&\delta(2H)&=\Delta_g\eta+|A|^2\eta.\label{eq:Hvar}
\end{align}
Indeed, \(\delta h_{ij}=\nabla_i\nabla_j\eta-\eta h_i{}^kh_{kj}\), hence
\begin{align*}
\delta(2H)&=\delta(g^{ij}h_{ij})\\
&=2\eta h^{ij}h_{ij}+g^{ij}(\nabla_i\nabla_j\eta-\eta h_i{}^kh_{kj})\\
&=\Delta_g\eta+|A|^2\eta.
\end{align*}
Writing \(u=2H-c_0\),
\begin{align}
\delta\mathcal F_{\CH}
&=\kappa\int_\Sigma u(\Delta_g\eta+|A|^2\eta)\dd A-\kappa\int_\Sigma Hu^2\eta\dd A.\label{eq:firstvar0}
\end{align}
On a smooth closed surface, integration by parts gives
\begin{equation}
\delta\mathcal F_{\CH}=\kappa\int_\Sigma\E_{\CH}\eta\dd A,\label{eq:firstvar}
\end{equation}
with
\begin{equation}
\E_{\CH}=\Delta_g(2H)+(2H-c_0)(2H^2-2K+c_0H).\label{eq:shapeoperator}
\end{equation}
The algebra is
\[
u|A|^2-Hu^2=u\{(4H^2-2K)-H(2H-c_0)\}=u(2H^2-2K+c_0H).
\]
For constrained area and oriented volume,
\[
\delta A=-\int2H\eta\dd A,\qquad\delta V=\int\eta\dd A,
\]
so the equation for \(\mathcal F_{\CH}+\lambda(A-A_0)+p(V-V_0)\) is
\begin{equation}
\kappa\E_{\CH}-2\lambda H+p=0.\label{eq:constrained}
\end{equation}
The last two terms are absolutely continuous with respect to area measure and cannot cancel an isolated Dirac mass.

\subsubsection{Why the original sign is not an error}
The original axisymmetric papers use the opposite convention
\[
H_{\rm old}=-\frac{c_1+c_2}{2}=-H.
\]
The energy \((c_1+c_2-c_0)^2\) is then \((2H_{\rm old}+c_0)^2\), and their equation
\begin{equation}
\Delta_g(2H_{\rm old})+(2H_{\rm old}+c_0)(2H_{\rm old}^2-2K-c_0H_{\rm old})=0\label{eq:oldsign}
\end{equation}
becomes minus \eqref{eq:shapeoperator} after setting \(H_{\rm old}=-H\). Multiplying by \(-1\) gives exactly \(\E_{\CH}=0\). Thus the simultaneous signs \(2H+c_0\) and \(-c_0H\) in the old convention are consistent. An earlier criticism that the spontaneous-curvature sign in the original papers is erroneous is therefore not valid; the difference arises solely from the choice of mean-curvature sign.

\subsection{Axisymmetric geometry}\label{sec:axisym}
A surface of revolution is locally
\begin{equation}
X(\rho,\phi)=(\rho\cos\phi,\rho\sin\phi,z(\rho)),\qquad z'(\rho)=\tan\psi(\rho).\label{eq:param}
\end{equation}
Here \(\psi\) is the angle between the tangent to the contour line and the positive \(\rho\)-axis. The geometry and the relevant angle are illustrated in Figure~\ref{fig:RBCcross}.
\begin{figure}[htbp]
\centering
\includegraphics[width=0.8\linewidth]{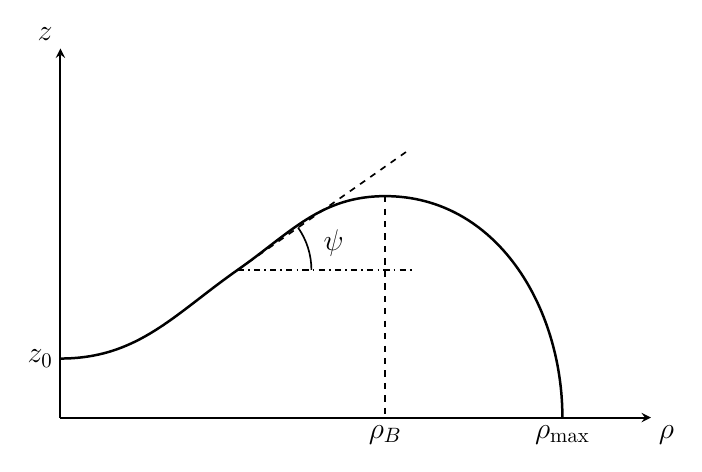}
\caption{Cross section of RBC with the rotational symmetry around the \(z\) axis and the reflection symmetry with respect to the \(\rho\) axis. \(\psi\) is the angle between the contour line and the \(\rho\) axis. Only one quadrant is shown.}
\label{fig:RBCcross}
\end{figure}
Choose \(n=(-\sin\psi\cos\phi,-\sin\psi\sin\phi,\cos\psi)\). Then the metric coefficients are
\[
g_{\rho\rho}=1+z'^2=\sec^2\psi,\qquad g_{\phi\phi}=\rho^2,\qquad g_{\rho\phi}=0.
\]
Thus the line element and area element become
\begin{align}
\dd s_g^2 =\sec^2\psi\dd\rho^2+\rho^2\dd\phi^2, \qquad \dd A =\frac\rho{\cos\psi}\dd\rho\dd\phi,\label{eq:metric}
\end{align}
where the positive branch of \(\cos\psi\) is used near the axis. The principal curvatures are computed from the parametrization. The meridional curvature is
\[
c_1=\cos\psi\,\psi',
\]
because the curvature of the planar generating curve \(z(\rho)\) is \(z''/(1+z'^2)^{3/2}\) and \(z'=\tan\psi\) gives \(z''=\sec^2\psi\,\psi'\), so the curvature is \(\cos\psi\,\psi'\). The parallel curvature is
\[
c_2=\frac{\sin\psi}{\rho}.
\]
Hence
\begin{align}
c_1 =\cos\psi\,\psi', \qquad c_2 =\frac{\sin\psi}{\rho}.\label{eq:curvaturesaxis}
\end{align}
For a radial function \(f=f(\rho)\),
\begin{equation}
\Delta_g f=\frac1{\sqrt{\det g}}\partial_\rho\left(\sqrt{\det g}\,g^{\rho\rho}\partial_\rho f\right)
=\frac{\cos\psi}{\rho}\frac\dd{\dd\rho}\left(\rho\cos\psi\,f'\right).\label{eq:lapradial}
\end{equation}
Set \(q=\sin\psi\). Then \(\cos\psi=\sqrt{1-q^2}\), and \(q'=\cos\psi\,\psi'\). The mean and Gaussian curvatures become
\begin{equation}
2H=q'+\frac q\rho,\qquad K=\frac{qq'}\rho,\qquad
\Delta_g f=(1-q^2)f''+\left(\frac{1-q^2}\rho-qq'\right)f'.\label{eq:qgeometry}
\end{equation}
The unconstrained shape equation is therefore the third-order equation
\begin{align}
0={}&(1-q^2)\left(q'''+\frac{q''}\rho-\frac{2q'}{\rho^2}+\frac{2q}{\rho^3}\right)\notag\\
&+\left(\frac{1-q^2}\rho-qq'\right)\left(q''+\frac{q'}\rho-\frac q{\rho^2}\right)\notag\\
&+\left(q'+\frac q\rho-c_0\right)\left[\frac12\left(q'+\frac q\rho\right)^2-\frac{2qq'}\rho+\frac{c_0}{2}\left(q'+\frac q\rho\right)\right].\label{eq:qODE}
\end{align}
This equation is valid only for \(\rho>0\) in a regular \(\rho\)-chart. Smooth closure at the axis is a separate geometric condition.

\subsection{Exact interior calculation for the biconcave profile}\label{sec:interior}
Let
\begin{equation}
L(\rho)=\log\frac{\rho}{\rho_B},\qquad q(\rho)=\sin\psi(\rho)=c_0\rho L(\rho).\label{eq:Naito}
\end{equation}
On every interval where \(|q|<1\),
\begin{equation}
q'=c_0(L+1),\qquad q''=\frac{c_0}{\rho},\qquad q'''=-\frac{c_0}{\rho^2}.\label{eq:qderivatives}
\end{equation}
Indeed, the first derivative is
\[
q'=c_0\left(L+\rho\frac{1}{\rho}\right)=c_0(L+1),
\]
and successive differentiations give \(q''=c_0/\rho\), \(q'''=-c_0/\rho^2\).

Substitution into \eqref{eq:qgeometry} gives
\begin{equation}
c_1=c_0(L+1),\qquad c_2=c_0L,\label{eq:principalexact}
\end{equation}
and hence
\begin{align}
2H&=c_0(2L+1),&K&=c_0^2L(L+1),&c_1-c_2&=c_0.\label{eq:HKexact}
\end{align}
The last identity shows why a \(C^2\) axis point is impossible when \(c_0\ne0\): rotational symmetry forces any smooth axis point to be umbilic.

The interior field equation follows from direct calculation. Since \((2H)'=2c_0/\rho\),
\begin{align}
\Delta_g(2H)
&=\frac{\cos\psi}{\rho}\frac\dd{\dd\rho}\left(2c_0\cos\psi\right)\notag\\
&=\frac{2c_0}{\rho}\cos\psi(\cos\psi)'\notag\\
&=-\frac{2c_0}{\rho}qq'=-2c_0^3L(L+1)=-2c_0K.\label{eq:lapH}
\end{align}
In the third equality \((\cos\psi)'=-\sin\psi\,\psi'=-q\,q'/\cos\psi\) was used, so that \(\cos\psi(\cos\psi)'=-qq'\).

Furthermore,
\begin{align}
2H^2-2K+c_0H
&=\frac{c_0^2}{2}(2L+1)^2-2c_0^2L(L+1)+\frac{c_0^2}{2}(2L+1)\notag\\
&=c_0^2(L+1),\label{eq:nonlinfactor}
\end{align}
while \(2H-c_0=2c_0L\). Thus
\begin{equation}
(2H-c_0)(2H^2-2K+c_0H)=2c_0L\cdot c_0^2(L+1)=2c_0^3L(L+1)=2c_0K.\label{eq:nonlinexact}
\end{equation}
Equations \eqref{eq:lapH} and \eqref{eq:nonlinexact} cancel exactly:
\begin{equation}
\E_{\CH}=0\qquad(\rho>0).\label{eq:punctured}
\end{equation}
This is a classical statement on the punctured surface, not yet a distributional statement on a neighborhood containing \(\rho=0\).

\subsection{Distributional Euler--Lagrange residue}\label{sec:residue}
Let \(D_\varepsilon=\{0<\rho<\varepsilon\}\). The outward unit conormal of its outer circle is \(\nu=\cos\psi\,\partial_\rho\), and \(\dd s=\varepsilon\dd\phi\). Hence
\begin{align}
\int_{\partial D_\varepsilon}\partial_\nu(2H)\dd s
&=\int_0^{2\pi}\cos\psi(\varepsilon)\frac{2c_0}{\varepsilon}\varepsilon\dd\phi\notag\\
&=4\pi c_0\cos\psi(\varepsilon)\longrightarrow4\pi c_0.\label{eq:flux}
\end{align}
This invariant flux calculation fixes the Dirac coefficient without replacing \(\Delta_g\) by a flat operator. A numerical verification of this convergence is provided in Appendix C.

\begin{proposition}[Distributional equation]\label{prop:distribution}
In a neighborhood of the dimple center, as distributions with respect to the induced area measure,
\begin{align}
\Delta_g(2H)&=-2c_0K+4\pi c_0\delta_p,\label{eq:distlap}\\
\E_{\CH}&=4\pi c_0\delta_p.\label{eq:distE}
\end{align}
\end{proposition}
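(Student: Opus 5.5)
The plan is to test both identities against a smooth compactly supported function $\eta$ on a small geodesic neighborhood $U$ of the dimple center $p$. The Dirac mass is isolated by excising the disk $D_\varepsilon$ and applying Green's identity on $U\setminus D_\varepsilon$, where the profile is smooth and the interior calculation of Section~\ref{sec:interior} applies. The distributional Laplacian is defined as usual by
\[
\langle\Delta_g(2H),\eta\rangle:=\int_U 2H\,\Delta_g\eta\dd A .
\]
This is well defined because $2H=c_0(2L+1)$ is only logarithmically singular, hence in $L^1(\dd A)$. Here $\dd A=\rho\sec\psi\dd\rho\dd\phi$ and $\cos\psi\to1$ as $\rho\to0$, since $q=c_0\rho L\to0$.

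\textbf{Step 1 (first identity).} On $U\setminus D_\varepsilon$, Green's second identity gives
\[
\int_{U\setminus D_\varepsilon}2H\,\Delta_g\eta\dd A
=\int_{U\setminus D_\varepsilon}\eta\,\Delta_g(2H)\dd A
+\int_{\partial D_\varepsilon}\bigl(\eta\,\partial_\nu(2H)-2H\,\partial_\nu\eta\bigr)\dd s .
\]
The sign of the boundary term is taken with the conormal pointing into $D_\varepsilon$, so it enters with the sign fixed by the flux computation~\eqref{eq:flux}. In the bulk term I substitute $\Delta_g(2H)=-2c_0K$ from~\eqref{eq:lapH}. Since $K=c_0^2L(L+1)$ is $O(L^2)$, it is integrable against $\rho\dd\rho$, and the bulk term converges to $\int_U(-2c_0K)\eta\dd A$. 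For the boundary terms, $|2H\,\partial_\nu\eta|\,\dd s=O(\varepsilon|\log\varepsilon|)\to0$. By~\eqref{eq:flux} and continuity of $\eta$,
\[
\int_{\partial D_\varepsilon}\eta\,\partial_\nu(2H)\dd s=4\pi c_0\cos\psi(\varepsilon)\,\bigl(\eta(p)+o(1)\bigr)\to 4\pi c_0\,\eta(p).
\]
Letting $\varepsilon\to0$ yields~\eqref{eq:distlap}.

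\textbf{Step 2 (second identity).} The nonlinear term $(2H-c_0)(2H^2-2K+c_0H)$ equals $2c_0K$ by~\eqref{eq:nonlinexact}. It is therefore an $L^1(\dd A)$ function and defines a distribution with no singular part. Adding it to~\eqref{eq:distlap}, the absolutely continuous parts cancel exactly as in~\eqref{eq:punctured}, which leaves $\E_{\CH}=4\pi c_0\delta_p$.

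It remains to justify that this $\E_{\CH}$ is the object appearing in the weak first variation~\eqref{eq:firstvar0}, and not merely a formal combination. I would do this by repeating the excision argument directly on~\eqref{eq:firstvar0}. The terms $u|A|^2\eta$ and $Hu^2\eta$ are $O(L^3)$ and hence integrable. The only surviving boundary term is again the flux of $\partial_\nu u=\partial_\nu(2H)$, which gives $\delta\mathcal F_{\CH}=\kappa\bigl(\int\eta\cdot 0\dd A+4\pi c_0\eta(p)\bigr)$.

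\textbf{Main obstacle.} The delicate step is the regularity bookkeeping at $p$. One must check that $\Delta_g\eta$ is bounded for a smooth test function on the nonsmooth surface, that the conormal derivative $\partial_\nu\eta$ stays bounded, and that the normal trace $\eta=V\cdot n$ of an additive variation is continuous at $p$ with a well-defined value $\eta(p)$.

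I would first handle this using the $C^{1,\alpha}$ graph structure. In Cartesian coordinates $g_{ij}\in C^{0,\alpha}$ with $\partial g\sim L$, and $g^{ij}\to\delta^{ij}$. Then $\Delta_g\eta=O(|\log\rho|)$, which is still integrable against $2H\dd A$ because $L^2\rho$ is integrable. The normal $n$ is continuous, so $\eta$ is continuous, and this suffices for every limit above.
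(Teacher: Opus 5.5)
Your proof is correct and is essentially the paper's argument: excise $D_\varepsilon$, apply Green's identity, evaluate the $\eta(p)$ part of the inner boundary term by the flux \eqref{eq:flux}, discard $2H\,\partial_\nu\eta$ as $O(\varepsilon|\log\varepsilon|)$, then add the integrable term $2c_0K$; the only variant is that you control $\eta-\eta(p)$ by continuity and the constancy of $\partial_\nu(2H)$ on the circle, instead of the Lipschitz bound $O(\rho)$. One slip to fix: the boundary term $\eta\,\partial_\nu(2H)-2H\,\partial_\nu\eta$ you display is correct for $\nu=\cos\psi\,\partial_\rho$ pointing \emph{out of} $D_\varepsilon$ (the conormal used in \eqref{eq:flux} and Appendix A), not for a conormal pointing into $D_\varepsilon$ as you state, which would flip the sign of the residue.
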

\begin{proof}
Let \(\varphi\in C_c^\infty\). Integrate by parts on \(D_R\setminus D_\varepsilon\). The inner boundary term containing \(\varphi(p)\) tends to \(4\pi c_0\varphi(p)\) by \eqref{eq:flux}. The term with \(\varphi-\varphi(p)=O(\rho)\) vanishes because the boundary length is \(O(\rho)\) and \(\partial_\nu(2H)=O(1/\rho)\). The regular part is \(-2c_0K\) by \eqref{eq:lapH}. Adding the locally integrable nonlinear term \(2c_0K\) from \eqref{eq:nonlinexact} proves \eqref{eq:distE}.
\end{proof}

The singular part of the first variation is therefore
\begin{equation}
\delta\mathcal F_{\CH}[\eta]=4\pi\kappa c_0\eta(p)+\text{regular terms}.\label{eq:variationresidue}
\end{equation}
A genuinely admissible free variation detects this distribution. In the Cartesian graph chart choose \(V_0=\chi\,n(p)\), where \(\chi\in C_c^\infty\) equals one near \(p\), and use the additive family \(X_t=X+tV_0\). Since \(X\in W^{2,p}\cap C^{1,\alpha}\) and \(dX(p)\) has rank two, \(X_t\) remains a \(W^{2,p}\) immersion for small \(t\). Its normal trace \(\eta_0=V_0\cdot n\) is smooth on the punctured chart, continuous at \(p\), and satisfies \(\eta_0(p)=1\). The punctured integration-by-parts formula therefore gives the nonzero value \(4\pi\kappa c_0\). Thus the profile is not a free weak critical point. The magnitude of the required normal point force is
\begin{equation}
|F_p|=4\pi\kappa|c_0|.\label{eq:force}
\end{equation}
In ambient-vector form the local residue is \(4\pi\kappa c_0n(p)\delta_p\). Two opposite pole forces may have zero total vector sum, but each local residue is detected by a variation supported near that pole.

\subsection{Free, forced, and pinned variational problems}\label{sec:models}

\subsubsection{Point-force functional}
Add the point potential
\begin{equation}
\mathcal F_{\rm ext}[X]=-F\cdot X(p).\label{eq:pointpotential}
\end{equation}
Its normal variation is \(-F\cdot n(p)\eta(p)\). Thus the profile is stationary for the forced problem when
\begin{equation}
F\cdot n(p)=4\pi\kappa c_0,\label{eq:forcebalance}
\end{equation}
subject to the separate global closure and matching conditions.

\subsubsection{Pinned point}
If the admissible class contains \(X(p)=q\), every admissible variation satisfies \(V(p)=0\). The residue is then balanced by the reaction associated with the point constraint. Restricting scalar tests to \(\eta(p)=0\) makes the Dirac term invisible, but changes the physical problem: it describes a pinned point, not a free membrane.

\subsubsection{Area and volume constraints}
Distributionally, \eqref{eq:constrained} would read
\begin{equation}
4\pi\kappa c_0\delta_p-2\lambda H+p=0.\label{eq:cannotcancel}
\end{equation}
The last two terms are locally integrable functions times area measure; the first is singular. Uniqueness of the Lebesgue decomposition forces the singular coefficient to vanish separately. For \(c_0\ne0\), ordinary area and volume multipliers cannot restore free stationarity.

\begin{lemma}[Constraint-preserving detection of the residue]\label{lem:constraintbump}
Assume the first variations of area and volume satisfy the usual rank-two constraint qualification. More precisely, choose two smooth vector fields \(V_1,V_2\) supported in regular patches disjoint from \(p\), write \(\beta_i=V_i\cdot n\), and assume that
\begin{equation}
M=\begin{pmatrix}
-2\displaystyle\int_\Sigma H\beta_1\dd A&-2\displaystyle\int_\Sigma H\beta_2\dd A\\[2mm]
\displaystyle\int_\Sigma\beta_1\dd A&\displaystyle\int_\Sigma\beta_2\dd A
\end{pmatrix}
\label{eq:constraintmatrix}
\end{equation}
is invertible. Then there exists an admissible \(W^{2,p}\) variation curve preserving area and volume exactly, whose initial velocity has normal component equal to one at \(p\). Consequently the residue \(4\pi\kappa c_0\delta_p\) is detected even inside the doubly constrained variation space.
\end{lemma}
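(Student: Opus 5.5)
We combine the pole bump \(V_0=\chi\,n(p)\) of Section~\ref{sec:residue} with the two correcting fields \(V_1,V_2\). We then use the implicit function theorem to restore the area and volume constraints exactly. Consider the three-parameter family
\[
X_{t,s_1,s_2}=X+tV_0+s_1V_1+s_2V_2 ,
\]
and the map \(\Phi(t,s)=\bigl(A(X_{t,s})-A(X),\,V(X_{t,s})-V(X)\bigr)\), where \(s=(s_1,s_2)\). As noted after \eqref{eq:variationresidue}, \(X\in W^{2,p}\cap C^{1,\alpha}\) and \(dX(p)\) has rank two. Hence, for \((t,s)\) small, \(X_{t,s}\) is still a \(W^{2,p}\) immersion, since the rank-two condition is open in \(C^1\). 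Area and enclosed volume depend only on first derivatives of the immersion, so the first step is to show that \(\Phi\) is \(C^1\) near the origin. For area this follows because \(\sqrt{\det g}\) is a smooth function of \(dX\) on the open set of rank-two matrices and \(dX\in C^{0}\). For volume we write \(V=\frac13\int X\cdot(\partial_1X\times\partial_2X)\), which is a polynomial in \((X,dX)\). Differentiation under the integral is justified by uniform continuity of \(dX_{t,s}\) on the compact surface.

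The second step is to compute the partial derivatives at the origin. Since \(V_1,V_2\) are supported away from \(p\) in regular patches, the smooth formulas \(\delta A=-\int2H\beta\dd A\) and \(\delta V=\int\beta\dd A\) apply. Thus \(\partial_s\Phi(0,0)=M\), which is invertible by hypothesis. The implicit function theorem then gives a \(C^1\) curve \(s(t)\) with \(s(0)=0\) and \(\Phi(t,s(t))=0\). In addition,
\[
s'(0)=-M^{-1}\partial_t\Phi(0,0).
\]
The curve \(X_t:=X_{t,s(t)}\) is therefore admissible. It consists of \(W^{2,p}\) immersions, it preserves area and volume exactly, and its initial velocity is \(W=V_0+s_1'(0)V_1+s_2'(0)V_2\). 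Because \(V_1,V_2\) vanish near \(p\), the normal trace satisfies \(W\cdot n(p)=V_0\cdot n(p)=1\).

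The main obstacle is computing \(\partial_t\Phi(0,0)\) for the singular field \(V_0\). The smooth formulas were derived only away from \(p\), and \(H\) is unbounded at \(p\). I would handle this on \(\Sigma\setminus D_\varepsilon\) and let \(\varepsilon\to0\). For area, the first variation is \(\int\langle dX,dV_0\rangle_g\dd A\), which is absolutely integrable. Integrating by parts on the punctured surface gives \(-\int 2H\eta_0\dd A\) plus a boundary term on \(\partial D_\varepsilon\) of size \(O(\varepsilon)\). This is legitimate because \(H=O(|\log\rho|)\) is in \(L^1\). The volume variation is continuous in \(V\) and equals \(\int\eta_0\dd A\). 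So \(\partial_t\Phi(0,0)\) is finite and the construction goes through.

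Finally, I would evaluate the first variation of \(\mathcal F_{\CH}\) along \(X_t\). It is linear in the velocity, so its value is the contribution of \(V_0\) plus \(s'(0)\) paired with smooth regular variations. By Proposition~\ref{prop:distribution}, the singular part is \(4\pi\kappa c_0\,W\cdot n(p)=4\pi\kappa c_0\). The regular contributions from \(V_1,V_2\) and from the absolutely continuous part are the terms that the multipliers \(\lambda,p\) in \eqref{eq:constrained} are designed to absorb. Since \(\delta_p\) is mutually singular with area measure, as in \eqref{eq:cannotcancel}, these terms cannot cancel the residue. It is therefore detected by the admissible, doubly constrained variation.
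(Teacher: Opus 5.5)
Your proposal is correct and is essentially the paper's proof: you use the same additive family $X+tV_0+s_1V_1+s_2V_2$ and the implicit function theorem with $\partial_s\Phi(0,0)=M$, and you observe that $V_1,V_2$ vanish near $p$, so $W\cdot n(p)=1$; your $C^1$ check of the constraint map and of $\partial_t\Phi(0,0)$ only makes explicit what the paper asserts. The last step needs tightening: along one curve the derivative of $\mathcal F_{\CH}$ is a single number, so mutual singularity is not what rules out cancellation. What does is that $\E_{\CH}=0$ on the punctured surface and $W\cdot n=0$ near any other pole, which makes that derivative exactly $4\pi\kappa c_0$.
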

\begin{proof}
In the Cartesian graph chart choose \(V_0=\chi n(p)\), with \(\chi\in C_c^\infty\), \(\chi=1\) near \(p\), and support disjoint from those of \(V_1,V_2\). Consider the three-parameter additive family
\[
X_{t,a,b}=X+tV_0+aV_1+bV_2.
\]
All three vector fields are smooth in the parameter domain. Because \(X\in W^{2,p}\cap C^{1,\alpha}\) and is nondegenerate at \(p\), this family remains in the local \(W^{2,p}\) immersion class for sufficiently small parameters. Let
\[
\mathcal C(t,a,b)=\binom{A[X_{t,a,b}]-A[X]}{\operatorname{Vol}[X_{t,a,b}]-\operatorname{Vol}[X]}.
\]
The derivative of \(\mathcal C\) with respect to \((a,b)\) at the origin is exactly \(M\), because the normal traces of \(V_i\) are \(\beta_i\). The finite-dimensional implicit-function theorem therefore gives functions \(a(t),b(t)\), with \(a(0)=b(0)=0\), such that
\[
\mathcal C(t,a(t),b(t))=0
\]
for all sufficiently small positive and negative \(t\). The initial velocity is
\[
V=V_0+a'(0)V_1+b'(0)V_2.
\]
Since the correction fields vanish near \(p\),
\[
V(p)\cdot n(p)=V_0(p)\cdot n(p)=1.
\]
Its scalar normal trace is smooth away from \(p\), continuous at \(p\), and the detailed punctured integration by parts in Appendix A applies. Hence the singular part of the first variation is \(4\pi\kappa c_0\), which is nonzero when \(c_0\ne0\). The conclusion is therefore valid for an exact constraint-preserving curve, not merely for a formal scalar test function.
\end{proof}

\begin{remark}
The above lemma underscores that the impossibility of cancelling the Dirac residue is not an artifact of linearized constraints; it persists even for exact constraint-preserving variations. This provides a strong argument against the possibility that the NOO profile could be a stationary point under ordinary area and volume constraints.
\end{remark}

If the two constraint gradients fail to have rank two, the standard two-multiplier theorem itself is degenerate and must be reformulated. This exceptional situation does not create a Dirac cancellation: every ordinary geometric constraint considered here has an absolutely continuous first-variation density, whereas the residue remains singular.

\section{Results}\label{sec:results}

\subsection{Local regularity at the dimple center}\label{sec:regularity}

\subsubsection{Graph expansion and sharp regularity}
Put \(r=\rho\). From \eqref{eq:Naito},
\begin{equation}
z'(r)=\tan\psi(r)=\frac{c_0rL(r)}{\sqrt{1-c_0^2r^2L(r)^2}}.\label{eq:zprime}
\end{equation}
Since \(rL(r)\to0\),
\[
z'(r)=c_0rL(r)+O(r^3|L(r)|^3).
\]
Using
\[
\int r\log(r/\rho_B)\dd r=\frac{r^2}{2}\log(r/\rho_B)-\frac{r^2}{4},
\]
we obtain
\begin{equation}
z(r)-z(0)=\frac{c_0}{2}r^2L(r)-\frac{c_0}{4}r^2+O(r^4|L(r)|^3).\label{eq:graphasymptotic}
\end{equation}
Thus the surface is the Cartesian graph
\begin{equation}
X(x,y)=(x,y,u(x,y)),\qquad u(x,y)=z(\sqrt{x^2+y^2}).\label{eq:graph}
\end{equation}
For a radial function, the Hessian eigenvalues are \(u_{rr}\) and \(u_r/r\). With \(q=c_0rL\),
\begin{align}
\frac{u_r}{r}&=\frac{c_0L}{\sqrt{1-q^2}},\label{eq:hess1}\\
u_{rr}&=\frac{q'}{(1-q^2)^{3/2}}=\frac{c_0(L+1)}{(1-q^2)^{3/2}}.\label{eq:hess2}
\end{align}
Therefore \(|D^2u|\le C(1+|\log r|)\). Since
\begin{equation}
\int_0^\varepsilon r|\log r|^p\dd r<\infty\qquad(p<\infty),\label{eq:logintegrable}
\end{equation}
it follows that \(u\in W^{2,p}\) for every finite \(p\). Also \(u_r=O(r|\log r|)\to0\), so
\begin{equation}
\partial_xX(0)=(1,0,0),\qquad\partial_yX(0)=(0,1,0).\label{eq:rank}
\end{equation}
The differential has rank two. By the two-dimensional Sobolev embedding,
\begin{equation}
X\in C^{1,\alpha}\quad\text{for every }\alpha<1.\label{eq:C1a}
\end{equation}
Because \eqref{eq:hess1}--\eqref{eq:hess2} are unbounded for \(c_0\ne0\), the graph is neither \(C^{1,1}\) nor \(C^2\).

\subsubsection{Finite curvature and finite Helfrich energy}
The exact area element is
\[
\dd A=\frac{r}{\sqrt{1-c_0^2r^2L^2}}\dd r\dd\phi.
\]
Using \eqref{eq:principalexact},
\begin{align}
\int_{D_\varepsilon}|A|^2\dd A
&=2\pi c_0^2\int_0^\varepsilon\frac{r\{(L+1)^2+L^2\}}{\sqrt{1-c_0^2r^2L^2}}\dd r<\infty,\label{eq:Atotal}\\
\mathcal F_{\CH}(D_\varepsilon)
&=4\pi\kappa c_0^2\int_0^\varepsilon\frac{rL^2}{\sqrt{1-c_0^2r^2L^2}}\dd r<\infty.\label{eq:energylocal}
\end{align}
The leading asymptotic is
\begin{equation}
\mathcal F_{\CH}(D_\varepsilon)=2\pi\kappa c_0^2\varepsilon^2\left(L_\varepsilon^2-L_\varepsilon+\frac12\right)+O(\varepsilon^4|L_\varepsilon|^4),\label{eq:energyasymptotic}
\end{equation}
where \(L_\varepsilon=\log(\varepsilon/\rho_B)\). This energy tends to zero even though the curvature diverges.

\subsection{The point is not a branch point}\label{sec:nobranch}
The rank computation \eqref{eq:rank} already excludes a geometric branch point. An independent conformal calculation exposes the source of the contrary claim.

Let \(r\) now denote an isothermal radius and require
\begin{equation}
\sec^2\psi\dd\rho^2+\rho^2\dd\phi^2=e^{2\lambda(r)}(\dd r^2+r^2\dd\phi^2).\label{eq:isothermal}
\end{equation}
Comparing coefficients gives
\begin{equation}
e^\lambda r=\rho,\qquad e^\lambda\dd r=\sec\psi\dd\rho,\qquad\frac{\dd r}{r}=\frac{\dd\rho}{\rho\cos\psi}.\label{eq:riso}
\end{equation}
Since \(q=c_0\rho L\),
\begin{equation}
\frac1{\cos\psi}=\frac1{\sqrt{1-q^2}}=1+O(\rho^2L^2).\label{eq:cosinv}
\end{equation}
Consequently,
\begin{align}
\log\frac r\rho&=\log C+\int_0^\rho\frac{1/\cos\psi(t)-1}{t}\dd t=\log C+O(\rho^2L^2),\label{eq:rhoratio}
\end{align}
and
\begin{equation}
r=C\rho\{1+O(\rho^2L^2)\},\qquad e^\lambda=\frac\rho r\longrightarrow C^{-1}>0.\label{eq:lambdabounded}
\end{equation}
For a weak conformal immersion with branch multiplicity \(n\), the standard local form is
\begin{equation}
\lambda(z)=(n-1)\log|z|+\omega(z),\label{eq:branchform}
\end{equation}
with \(\omega\) bounded under the usual hypotheses \cite{MondinoScharrer2020,Riviere2008,BernardRiviere2013}. Equation \eqref{eq:lambdabounded} gives \(n=1\), not \(n=2\).

If \(t=\log\rho\), then even the flat polar metric is
\[
\dd\rho^2+\rho^2\dd\phi^2=e^{2t}(\dd t^2+\dd\phi^2).
\]
The factor \(e^t\) is the ordinary polar Jacobian, not a branch factor. Omitting it assigns a fictitious branch to a flat disk. Similarly, the normal height term \(r^2\log r\) cannot determine branch order because the tangential components of \(X(x,y)\) already have nonzero linear terms.

\subsection{Main theorem}\label{sec:maintheorem}
\begin{theorem}[Classification of the logarithmic dimple]\label{thm:main}
Let \(c_0\ne0\), \(\rho_B>0\), and let a surface of revolution near \(\rho=0\) satisfy
\[
\sin\psi(\rho)=c_0\rho\log(\rho/\rho_B),\qquad z'(\rho)=\tan\psi(\rho),
\]
on a sufficiently small interval where the square root in \eqref{eq:zprime} is real. Then the following statements hold. The surface extends across \(\rho=0\) as a nondegenerate graph immersion in \(W^{2,p}_{\rm loc}\) for every finite \(p\), hence in \(C^{1,\alpha}\) for every \(\alpha<1\). It is neither \(C^{1,1}\) nor \(C^2\), and its principal curvatures diverge logarithmically. Its total curvature and Canham--Helfrich energy are finite near the point. Its branch multiplicity is one: the differential has rank two and the conformal factor tends to a positive finite limit. It satisfies \(\E_{\CH}=0\) classically on the punctured neighborhood. On the complete neighborhood, \(\E_{\CH}=4\pi c_0\delta_p\) in distributions. It is not a critical point of the unforced free functional under arbitrary compactly supported normal variations, and ordinary area/volume multipliers do not cancel the residue. It is stationary for a point-forced problem with normal force coefficient \(4\pi\kappa c_0\), or for a pinned-point problem whose reaction supplies that coefficient.
\end{theorem}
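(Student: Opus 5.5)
The theorem collects the preceding results, so I would prove it by assembling them in order and checking that each ingredient applies on one common small disk. Fix \(\varepsilon>0\) with \(|c_0\varepsilon\log(\varepsilon/\rho_B)|<1\) and \(\varepsilon<\rho_B\), so that \(q=c_0\rho L\) stays strictly inside \((-1,1)\) on \((0,\varepsilon]\). Since \(\rho L\to0\), \(\cos\psi\) is bounded below there and the positive branch is well defined. Then \(z'\) from \eqref{eq:zprime} is integrable at \(0\), so \(z(0)\) exists, and the graph \eqref{eq:graph} extends continuously across the axis.

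\textbf{Regularity.} The Hessian eigenvalues \eqref{eq:hess1}--\eqref{eq:hess2} give \(|D^2u|\le C(1+|\log r|)\) for \(r>0\). I must also check that these pointwise second derivatives are the weak derivatives across the origin. This follows because \(u\in C^1\) with \(Du(0)=0\), and \(Du\) is Lipschitz away from \(0\) with an integrable bound, so removing a point (a set of zero capacity in 2D) costs nothing. Then \eqref{eq:logintegrable} yields \(u\in W^{2,p}\) for all finite \(p\), Morrey embedding gives \(C^{1,\alpha}\) for all \(\alpha<1\), and \eqref{eq:rank} gives nondegeneracy. Failure of \(C^{1,1}\) follows because \(u_r/r\sim c_0\log r\) is unbounded, which contradicts a bounded Hessian a.e.

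\textbf{Curvature, energy and branching.} The principal curvatures \eqref{eq:principalexact} diverge like \(c_0\log\rho\). Finiteness of total curvature and energy is \eqref{eq:Atotal}--\eqref{eq:energylocal}. The branch statement combines \eqref{eq:rank} with the isothermal computation \eqref{eq:rhoratio}--\eqref{eq:lambdabounded}. Comparing the bounded limit of \(\lambda\) with \eqref{eq:branchform} forces \(n=1\).

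\textbf{Euler--Lagrange statements.} The punctured equation is \eqref{eq:punctured}, and the distributional identity is Proposition~\ref{prop:distribution}. The main obstacle is justifying that the first variation of the \(W^{2,p}\) family \(X+tV\) equals the punctured formula with the Dirac term included. I would first differentiate under the integral: \(\partial_t(2H_t-c_0)^2\dd A_t\) is dominated by \(C(1+|\log\rho|^2)\), which is integrable. I would then integrate by parts on \(D_R\setminus D_\varepsilon\) and pass \(\varepsilon\to0\) using three bounds:
\begin{itemize}
\item the boundary term \(\oint u\,\partial_\nu\eta\) is \(O(\varepsilon|\log\varepsilon|)\to0\), since \(\eta\) is Lipschitz;
\item the term \(\oint\eta\,\partial_\nu u\) converges to \(4\pi c_0\eta(p)\) by \eqref{eq:flux};
\item the tangential part of \(V\) contributes only boundary terms of order \(O(\varepsilon\cdot\log^2\varepsilon)\).
\end{itemize}
Taking \(V_0=\chi n(p)\) then gives \(\delta\mathcal F=4\pi\kappa c_0\neq0\), so the surface is not a free critical point.

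\textbf{Constraints, forcing and pinning.} Non-cancellation by multipliers follows from the Lebesgue decomposition argument around \eqref{eq:cannotcancel}, and Lemma~\ref{lem:constraintbump} gives the same conclusion for exact constraint-preserving curves. Forced stationarity follows from adding \eqref{eq:pointpotential} with \eqref{eq:forcebalance}, which makes the total first variation vanish because the regular part is zero. Pinned stationarity holds because \(V(p)=0\) kills the \(\eta(p)\) term.
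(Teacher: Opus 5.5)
Your proposal is correct and follows the same route as the paper's proof, which likewise assembles the graph expansion and Hessian bounds, the local energy integrals, the rank and conformal-factor computations, Proposition~\ref{prop:distribution}, the Lebesgue-decomposition argument, and the forced/pinned variants; your dominated-convergence and tangential-boundary-term justification of the first variation along \(X+tV\) fills in a step the paper only asserts. Two harmless slips need fixing: for a general \(V\) with a tangential component at \(p\), the trace \(\eta=V\cdot n\) is not Lipschitz, because its gradient contains \(V^T\cdot\nabla n=O(|\log\rho|)\), although \(\oint u\,\partial_\nu\eta\,\dd s=O(\varepsilon\log^2\varepsilon)\) still vanishes; and \(\varepsilon<\rho_B\) together with \(|q(\varepsilon)|<1\) does not give \(|q|<1\) on all of \((0,\varepsilon]\) unless you also take \(\varepsilon\le\rho_B/e\), the range on which \(\rho|L|\) is increasing.
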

\begin{proof}
The first three statements follow from \eqref{eq:graphasymptotic}--\eqref{eq:energylocal}. The branch multiplicity statement follows independently from \eqref{eq:rank} and \eqref{eq:lambdabounded}. The classical punctured equation is \eqref{eq:punctured}. The distributional equation is Proposition \ref{prop:distribution}. The failure of free stationarity follows from \eqref{eq:variationresidue} and the singular/absolutely-continuous decomposition in \eqref{eq:cannotcancel}. The stationarity under point forcing or pinning follows from \eqref{eq:pointpotential}--\eqref{eq:forcebalance} and the pinned-point constraint.
\end{proof}

\subsection{Relation to weak immersion and minimization theory}\label{sec:weak}
The weak-immersion framework is indispensable for compactness: minimizing sequences can develop concentration, multiple covers, necks, and bubble trees. Nevertheless, the valid implication is
\[
\text{minimizer}\Longrightarrow\text{stationary}\Longrightarrow\text{admissible finite-energy object},
\]
where the first arrow presupposes an appropriate constraint qualification. Neither converse is valid in general, and finite energy alone does not verify every technical requirement of a chosen admissible class.

Mondino and Scharrer prove existence in a bubble-tree compactification and regularity away from genuine branch points \cite{MondinoScharrer2020}. A local weak equation tested only on the complement of a marked set proves regularity and stationarity there; it does not establish global free stationarity at the omitted points. Here the issue is sharper because Sections \ref{sec:regularity} and \ref{sec:nobranch} prove that the dimple center is an ordinary, nondegenerate immersion point of multiplicity one. It cannot be removed from the global variation space by relabeling it as a branch point.

For a genuine branched conformal immersion with local multiplicities \(n_j\), the generalized identity is
\begin{equation}
\int_\Sigma K\dd\mu=2\pi\chi(\Sigma)+2\pi\sum_j(n_j-1),\label{eq:branchedGB}
\end{equation}
under the standard closed-surface hypotheses. Here the local multiplicity is one, so there is no branch correction.

Bubble-tree examples show that a generalized minimizer may consist of smooth components joined only in the limiting description. They do not identify the present logarithmic graph with a collapsed catenoidal neck. Such an identification would require an explicit family of smooth critical or minimizing surfaces, a specified convergence topology, area/volume and energy control, and a proof matching the limiting neck stress to \eqref{eq:force}. The term \(r^2\log r\) alone proves none of these statements. Therefore, the interpretation as a catenoidal neck collapse remains an open problem and is not a conclusion of the present work.

\section{Discussion}\label{sec:discussion}

\subsection{Comparison with the parametrization analysis}\label{sec:1995}
Podgornik, Svetina, and \v{Z}ek\v{s} showed that standard parametrizations yield equivalent Euler--Lagrange equations for regular smooth spherical contours, while nonanalytic higher derivatives at poles correspond to altered physical problems involving point sources or geometric constraints \cite{Podgornik1995}. For the constraints in their numerical example, the zero-force smooth branch had lower bending energy than the forced branches. The present calculation is consistent with that analysis and adds the exact covariant Dirac coefficient \(4\pi\kappa c_0\), the sharp local regularity \(W^{2,p}\) for all finite \(p\) but not \(C^2\), two independent proofs that the singularity is unbranched, and the residue in the full nonlinear shape equation rather than only in its linearization. The point-force interpretation itself is therefore not wholly new; the present contribution is the precise classification, covariant residue, and correction of the branch-point claim. In particular, the result provides a rigorous, nonlinear justification for the physical intuition in \cite{Podgornik1995} that nonanalytic pole behaviour signals external forces, and it quantifies the force coefficient explicitly.

\subsection{Physical interpretation and limitations}\label{sec:physical}
The theorem supports a specific but limited conclusion. The formula is an exact interior solution and a legitimate finite-energy weak geometric object. The curvature divergence is sufficiently mild that the local bending energy tends to zero. These facts explain why it can approximate a biconcave contour over most of the surface.

The assertion that the singularity is automatically permitted as a branch point of a free Canham--Helfrich minimizer is not supported. It is not a branch point, and its full first variation contains a nonzero point residue. Accordingly, the profile represents a membrane subject to localized normal forces, a membrane with pinned dimple centers or equivalent point constraints, a punctured-domain outer solution to be matched to a small regularized core, or a possible limit of a specially constructed constrained sequence whose convergence is proved separately. It is not established as an unforced free minimizer.

The sharp \(W^{2,p}\) regularity (all finite \(p\), but not \(C^2\)) implies that the square of the mean curvature is integrable, which accounts for the finite bending energy despite the logarithmic divergence of \(H\). At the same time, the non-integrability of the second derivatives in \(L^\infty\) leads to a nonzero stress flux, manifesting as the point residue. This dichotomy illustrates how finite energy and localized forces can coexist in a weak geometric setting.

For a real red blood cell, spectrin shear elasticity, bilayer area difference, heterogeneity, active stress, and global constraints may regularize or support the dimple. The ideal fluid-membrane profile must therefore be presented as a singular reduced model, not as a complete erythrocyte mechanics model.

\subsection{Perspectives and applications}\label{sec:impacts}
Beyond resolving a longstanding ambiguity in membrane geometry, this classification bridges fundamental mathematical physics with the mechanics of complex and non-equilibrium soft matter. In biophysical modeling, it rigorously proves that a purely fluid Canham--Helfrich membrane cannot sustain this exact biconcave geometry spontaneously. Physical erythrocytes must rely on additional symmetry-breaking mechanisms, such as the shear elasticity of the spectrin cytoskeleton \cite{Discher1994,Lim2002,Mukhopadhyay2002}, localized protein pinning, or spontaneous curvature heterogeneities, to balance this theoretical force residue.

More broadly, this singular framework also offers a template for studying geometric constraints in active matter hydrodynamics \cite{Wu2015,Wu2016,Marchetti2013,SalbreuxJulicher2017,Farutin2019}. Just as active nematic films exhibit self-driven flows governed by the dynamics of discrete topological defects, active biological membranes may generate localized, non-reciprocal stress fields capable of dynamically sustaining point-force singularities. Formulating generalized Helfrich equations with localized active stress gradients presents a natural extension of the distributional calculus developed here.

Finally, the methodology of extracting discrete force residues from continuous shape equations impacts the broader study of two-dimensional fluctuating membranes and mechanical metamaterials. Recognizing that finite bending energy can mask underlying localized forces ensures that future theoretical evaluations of minimal surfaces, whether in fluid bilayers, non-local electro-elastic media, or odd-elastic membranes \cite{Scheibner2020}, will structurally account for hidden distributional sources.

\section{Conclusion}\label{sec:conclusion}
Modern weak immersion theory provides an indispensable framework for understanding why finite-energy limits, singular necks, and bubble trees must be admitted in variational geometry \cite{MondinoScharrer2020}. However, it does not universally convert every isolated curvature singularity into a free equilibrium state. For the NOO profile \cite{Naito1993,Naito1996}, the covariant analysis establishes that the metric is nondegenerate, the conformal factor remains strictly positive and bounded, and the geometric branch multiplicity is exactly one. Because the stress flux limits to a distributional point residue of \(4\pi\kappa c_0\delta_p\), the strongest conclusion supported by both the explicit formula and weak existence theory is that this profile is an unbranched finite-energy weak immersion with point-force residues, classically unforced only on the punctured surface.

\appendix
\section*{Appendix}
\subsection*{A. Detailed distributional integration by parts}
Let \(u=2H\), \(A_{\varepsilon,R}=D_R\setminus\overline{D_\varepsilon}\), and \(\varphi\in C_c^\infty(D_R)\). Green's identity gives
\begin{align}
\int_{A_{\varepsilon,R}}u\Delta_g\varphi\dd A
=\int_{A_{\varepsilon,R}}\varphi\Delta_g u\dd A+\int_{\partial A_{\varepsilon,R}}(u\partial_\nu\varphi-\varphi\partial_\nu u)\dd s.
\end{align}
The outer contribution vanishes when the support lies inside \(D_R\). The inner conormal of the annulus points toward decreasing \(\rho\), hence
\begin{align*}
\int_{\partial D_\varepsilon}(u\partial_{\nu_A}\varphi-\varphi\partial_{\nu_A}u)\dd s
=-\int_{\partial D_\varepsilon}u\partial_{\nu_D}\varphi\dd s+\int_{\partial D_\varepsilon}\varphi\partial_{\nu_D}u\dd s.
\end{align*}
The first term tends to zero because \(u=O(|\log\varepsilon|)\) and the boundary length is \(O(\varepsilon)\). Split the second as
\[
\varphi(p)\int_{\partial D_\varepsilon}\partial_{\nu_D}u\dd s+\int_{\partial D_\varepsilon}(\varphi-\varphi(p))\partial_{\nu_D}u\dd s.
\]
The last integral vanishes because \(\varphi-\varphi(p)=O(\varepsilon)\), \(\partial_{\nu_D}u=O(1/\varepsilon)\), and \(\dd s=O(\varepsilon)\). The first tends to \(4\pi c_0\varphi(p)\), proving \eqref{eq:distlap} with all orientations accounted for.

\subsection*{B. Absence of an intrinsic cone defect}
From \eqref{eq:metric},
\[
g_{\rho\rho}=1+c_0^2\rho^2L^2+O(\rho^4L^4),\qquad g_{\phi\phi}=\rho^2.
\]
The geodesic radius is \(s(\rho)=\rho+O(\rho^3L^2)\), while the circumference is exactly \(2\pi\rho\). Hence
\[
\frac{\operatorname{Length}(\partial D_\rho)}{s(\rho)}=2\pi+O(\rho^2L^2)\longrightarrow2\pi.
\]
There is no cone-angle deficit. The Dirac mass belongs to the fourth-order shape operator, not to the intrinsic Gaussian-curvature measure.

\subsection*{C. Numerical convergence indicators}
For \(c_0=-1\), \(\rho_B=1\), the exact flux is \(-4\pi\sqrt{1-\varepsilon^2\log^2\varepsilon}\).
\begin{table*}[b]
\centering
\renewcommand{\arraystretch}{1.5}
\setlength{\tabcolsep}{14pt}
\begin{tabular*}{\textwidth}{@{\extracolsep{\fill}}rrrr@{}}
\toprule
\(\varepsilon\) & \(q(\varepsilon)\) & flux & relative error \\
\midrule
0.1   & 0.230259 & \(-12.228706\) & \(2.6871\times10^{-2}\) \\
0.05  & 0.149787 & \(-12.424601\) & \(1.1282\times10^{-2}\) \\
0.02  & 0.078240 & \(-12.527849\) & \(3.0655\times10^{-3}\) \\
0.01  & 0.046052 & \(-12.553038\) & \(1.0609\times10^{-3}\) \\
0.005 & 0.026492 & \(-12.561960\) & \(3.5096\times10^{-4}\) \\
0.001 & 0.006908 & \(-12.566071\) & \(2.3859\times10^{-5}\) \\
\bottomrule
\end{tabular*}
\end{table*}
%
% Each of the commands below will create an unnumbered section with the appropriate heading.
% Remove any sections that are not relevant for your article.
% All sections except suppdata will be removed if the [anonymous] option is used.
% See iopjournal-guidelines.pdf for more information.
%

\ack{H.W. thanks R. Ma for informative discussions.}

\funding{H.W. is supported by the General Program of National Natural Science Foundation of China under Grant No. 12374210 and the open research fund of Songshan Lake Materials Laboratory No. 2023SLABFN20.}
% This section is a list of funder names and grant numbers

%\roles{Sample text inserted for demonstration.}
% List author names and the contributions made to the article, using terms from the NISO Contributor Roles Taxonomy (CRediT) https://credit.niso.org

\data{No new data were created or analysed in this study.}%All data that support the findings of this study are included within the article (and any supplementary files).}%No new data were created or analysed in this study.}%All data that support the findings of this study are included within the article.}
% For more information on IOP Publishing's research data policy see: https://publishingsupport.iopscience.iop.org/questions/research-data/

%\suppdata{Sample text inserted for demonstration.}

%\section*{References}


\begin{thebibliography}{99}
\bibitem{Canham1970} P. B. Canham, ``The minimum energy of bending as a possible explanation of the biconcave shape of the human red blood cell,'' \emph{J. Theor. Biol.} \textbf{26}, 61--81 (1970), \href{https://doi.org/10.1016/S0022-5193(70)80032-7}{doi:10.1016/S0022-5193(70)80032-7}.
\bibitem{Helfrich1973} W. Helfrich, ``Elastic properties of lipid bilayers: theory and possible experiments,'' \emph{Z. Naturforsch. C} \textbf{28}, 693--703 (1973), \href{https://doi.org/10.1515/znc-1973-11-1209}{doi:10.1515/znc-1973-11-1209}.
\bibitem{Deuling1976a} H. J. Deuling and W. Helfrich, ``The curvature elasticity of fluid membranes: A catalogue of vesicle shapes,'' \emph{J. Phys.} \textbf{37}, 1335--1345 (1976), \href{https://doi.org/10.1051/jphys:0197600370110133500}{doi:10.1051/jphys:0197600370110133500}.
\bibitem{Seifert1991} U. Seifert, K. Berndl, and R. Lipowsky, ``Shape transformations of vesicles: Phase diagram for spontaneous-curvature and bilayer-coupling models,'' \emph{Physical Review A} \textbf{44}, 1182 (1991), \href{https://doi.org/10.1103/PhysRevA.44.1182}{doi:10.1103/PhysRevA.44.1182}.
\bibitem{SvetinaZeks1989} S. Svetina and B. \v{Z}ek\v{s}, ``Membrane bending energy and shape determination of phospholipid vesicles and red blood cells,'' \emph{Eur. Biophys. J.} \textbf{17}, 101--111 (1989), \href{https://doi.org/10.1007/BF00257107}{doi:10.1007/BF00257107}.
\bibitem{TuOuYang2004} Z. Tu and Z. Ouyang, ``A geometric theory on the elasticity of bio-membranes,'' \emph{J. Phys. A: Math. Gen.} \textbf{37}, 11407--11429 (2004), \href{https://doi.org/10.1088/0305-4470/37/47/009}{doi:10.1088/0305-4470/37/47/009}.
\bibitem{OuYangHelfrich1989} Z. Ouyang and W. Helfrich, ``Bending energy of vesicle membranes: General expressions for the first, second, and third variation of the shape energy and applications to spheres and cylinders,'' \emph{Phys. Rev. A} \textbf{39}, 5280--5288 (1989), \href{https://doi.org/10.1103/PhysRevA.39.5280}{doi:10.1103/PhysRevA.39.5280}.
\bibitem{Seifert1997} U. Seifert, ``Configurations of fluid membranes and vesicles,'' \emph{Adv. Phys.} \textbf{46}, 13--137 (1997), \href{https://doi.org/10.1080/00018739700101488}{doi:10.1080/00018739700101488}.
\bibitem{Liu1999} Q. Liu, H. Zhou, J. Liu, and Z. Ouyang, ``Spheres and prolate and oblate ellipsoids from an analytical solution of the spontaneous-curvature fluid-membrane model,'' \emph{Phys. Rev. E} \textbf{60}, 3227--3233 (1999), \href{https://doi.org/10.1103/PhysRevE.60.3227}{doi:10.1103/PhysRevE.60.3227}.
\bibitem{Wu2018} H. Wu, M. A. Ponce de Le\'{o}n, and H. G. Othmer, ``Getting in shape and swimming: The role of cortical forces and membrane heterogeneity in eukaryotic cells,'' \emph{J. Math. Biol.} \textbf{77}, 595--626 (2018), \href{https://doi.org/10.1007/s00285-018-1218-3}{doi:10.1007/s00285-018-1218-3}.
\bibitem{Wu2025} H. Wu and Z. Ouyang, ``A generalized Helfrich free energy framework for multicomponent fluid membranes,'' \emph{Membranes} \textbf{15}, 182 (2025), \href{https://doi.org/10.3390/membranes15060182}{doi:10.3390/membranes15060182}.
\bibitem{WuOuYang2026} H. Wu and Z. Ouyang, ``Unveiling the Heterogeneity and Multifunctions of Biological Membranes: A Unifying Perspective in Membrane Biophysics from Molecular Coupling to Cellular Function,'' \emph{Membranes} \textbf{16}, 79 (2026), \href{https://doi.org/10.3390/membranes16030079}{doi:10.3390/membranes16030079}.
\bibitem{Deuling1976b} H. J. Deuling and W. Helfrich, ``Red blood cell shapes as explained on the basis of curvature elasticity,'' \emph{Biophys. J.} \textbf{16}, 861--868 (1976), \href{https://doi.org/10.1016/S0006-3495(76)85736-0}{doi:10.1016/S0006-3495(76)85736-0}.
\bibitem{Naito1993} H. Naito, M. Okuda, and Z. Ouyang, ``Counterexample to some shape equations for axisymmetric vesicles,'' \emph{Phys. Rev. E} \textbf{48}, 2304--2307 (1993), \href{https://doi.org/10.1103/PhysRevE.48.2304}{doi:10.1103/PhysRevE.48.2304}.
\bibitem{Naito1996} H. Naito, M. Okuda, and Z. Ouyang, ``Polygonal shape transformation of a circular biconcave vesicle induced by osmotic pressure,'' \emph{Phys. Rev. E} \textbf{54}, 2816--2826 (1996), \href{https://doi.org/10.1103/PhysRevE.54.2816}{doi:10.1103/PhysRevE.54.2816}.
\bibitem{Evans1972} E. Evans and Y. C. Fung, ``Improved measurements of the erythrocyte geometry,'' \emph{Microvasc. Res.} \textbf{4}, 335--347 (1972), \href{https://doi.org/10.1016/0026-2862(72)90069-6}{doi:10.1016/0026-2862(72)90069-6}.
\bibitem{CapovillaGuven2002} R. Capovilla and J. Guven, ``Stresses in lipid membranes,'' \emph{J. Phys. A: Math. Gen.} \textbf{35}, 6233--6247 (2002), \href{https://doi.org/10.1088/0305-4470/35/30/302}{doi:10.1088/0305-4470/35/30/302}.
\bibitem{CastroGuven2007} P. Castro-Villarreal and J. Guven, ``Inverted catenoids, curvature singularities, and tethered membranes,'' \emph{Phys. Rev. E} \textbf{76}, 011922 (2007), \href{https://doi.org/10.1103/PhysRevE.76.011922}{doi:10.1103/PhysRevE.76.011922}.
\bibitem{GuvenVazquez2013} J. Guven and P. V\'azquez-Montejo, ``Force dipoles and stable local defects on fluid vesicles,'' \emph{Phys. Rev. E} \textbf{87}, 042710 (2013), \href{https://doi.org/10.1103/PhysRevE.87.042710}{doi:10.1103/PhysRevE.87.042710}.
\bibitem{KuwertSchaetzle2004} E. Kuwert and R. Sch\"atzle, ``Removability of point singularities of Willmore surfaces,'' \emph{Ann. of Math.} \textbf{160}, 315--357 (2004), \href{https://doi.org/10.4007/annals.2004.160.315}{doi:10.4007/annals.2004.160.315}.
\bibitem{MondinoScharrer2020} A. Mondino and C. Scharrer, ``Existence and regularity of spheres minimising the Canham--Helfrich energy,'' \emph{Arch. Ration. Mech. Anal.} \textbf{236}, 1455--1485 (2020), \href{https://doi.org/10.1007/s00205-020-01497-4}{doi:10.1007/s00205-020-01497-4}.
\bibitem{ChoksiVeneroni2013} R. Choksi and M. Veneroni, ``Global minimizers for the doubly-constrained Helfrich energy: The axisymmetric case,'' \emph{Calc. Var. Partial Differential Equations} \textbf{48}, 337--366 (2013), \href{https://doi.org/10.1007/s00526-012-0553-9}{doi:10.1007/s00526-012-0553-9}.
\bibitem{Eichmann2020} S. Eichmann, ``Lower semicontinuity for the Helfrich problem,'' \emph{Ann. Global Anal. Geom.} \textbf{58}, 147--175 (2020), \href{https://doi.org/10.1007/s10455-020-09718-5}{doi:10.1007/s10455-020-09718-5}.
\bibitem{RuppScharrer2023} F. Rupp and C. Scharrer, ``Li--Yau inequalities for the Helfrich functional and applications,'' \emph{Calc. Var. Partial Differential Equations} \textbf{62}, Article 45 (2023), \href{https://doi.org/10.1007/s00526-022-02381-7}{doi:10.1007/s00526-022-02381-7}.
\bibitem{KubinLussardiMorandotti2024} A. Kubin, L. Lussardi, and M. Morandotti, ``Direct minimization of the Canham--Helfrich energy on generalized Gauss graphs,'' \emph{J. Geom. Anal.} \textbf{34}, Article 121 (2024), \href{https://doi.org/10.1007/s12220-024-01564-2}{doi:10.1007/s12220-024-01564-2}.
\bibitem{Riviere2008} T. Rivi\`ere, ``Analysis aspects of Willmore surfaces,'' \emph{Invent. Math.} \textbf{174}, 1--45 (2008), \href{https://doi.org/10.1007/s00222-008-0129-7}{doi:10.1007/s00222-008-0129-7}.
\bibitem{BernardRiviere2013} Y. Bernard and T. Rivi\`ere, ``Singularity removability at branch points for Willmore surfaces,'' \emph{Pacific J. Math.} \textbf{265}, 257--311 (2013), \href{https://doi.org/10.2140/pjm.2013.265.257}{doi:10.2140/pjm.2013.265.257}.
\bibitem{Podgornik1995} R. Podgornik, S. Svetina, and B. \v{Z}ek\v{s}, ``Parametrization invariance and shape equations of elastic axisymmetric vesicles,'' \emph{Phys. Rev. E} \textbf{51}, 544--547 (1995), \href{https://doi.org/10.1103/PhysRevE.51.544}{doi:10.1103/PhysRevE.51.544}.
\bibitem{Discher1994} D. E. Discher, N. Mohandas, and E. A. Evans, ``Molecular Maps of Red Cell Deformation: Hidden Elasticity and in Situ Connectivity,'' \emph{Science} \textbf{266}, 1032--1035 (1994), \href{https://doi.org/10.1126/science.7973655}{doi:10.1126/science.7973655}.
\bibitem{Lim2002} H. W. G. Lim, M. Wortis, and R. Mukhopadhyay, ``Stomatocyte--discocyte--echinocyte sequence of the human red blood cell: Evidence for the bilayer--couple hypothesis from membrane mechanics,'' \emph{Proc. Natl. Acad. Sci. U.S.A.} \textbf{99}, 16766--16769 (2002), \href{https://doi.org/10.1073/pnas.202617299}{doi:10.1073/pnas.202617299}.
\bibitem{Mukhopadhyay2002} R. Mukhopadhyay, H. W. G. Lim, and M. Wortis, ``Echinocyte shapes: Bending, stretching, and shear determine spicule shape and spacing,'' \emph{Biophys. J.} \textbf{82}, 1756--1772 (2002), \href{https://doi.org/10.1016/S0006-3495(02)75527-6}{doi:10.1016/S0006-3495(02)75527-6}.
\bibitem{Wu2015} H. Wu, M. Thi\'{e}baud, W.-F. Hu, A. Farutin, S. Rafa\"{\i}, M.-C. Lai, P. Peyla, and C. Misbah, ``Amoeboid motion in confined geometry,'' \emph{Phys. Rev. E} \textbf{92}, 050701(R) (2015), \href{https://doi.org/10.1103/PhysRevE.92.050701}{doi:10.1103/PhysRevE.92.050701}.
\bibitem{Wu2016} H. Wu, A. Farutin, W.-F. Hu, M. Thi\'{e}baud, S. Rafa\"{\i}, P. Peyla, M.-C. Lai, and C. Misbah, ``Amoeboid swimming in a channel,'' \emph{Soft Matter} \textbf{12}, 7470--7484 (2016), \href{https://doi.org/10.1039/C6SM01235F}{doi:10.1039/C6SM01235F}.
\bibitem{Marchetti2013} M. C. Marchetti, J. F. Joanny, S. Ramaswamy, T. B. Liverpool, J. Prost, M. Rao, and R. A. Simha, ``Hydrodynamics of soft active matter,'' \emph{Rev. Mod. Phys.} \textbf{85}, 1143--1189 (2013), \href{https://doi.org/10.1103/RevModPhys.85.1143}{doi:10.1103/RevModPhys.85.1143}.
\bibitem{SalbreuxJulicher2017} G. Salbreux and F. J\"ulicher, ``Mechanics of active surfaces,'' \emph{Phys. Rev. E} \textbf{96}, 032404 (2017), \href{https://doi.org/10.1103/PhysRevE.96.032404}{doi:10.1103/PhysRevE.96.032404}.
\bibitem{Farutin2019} A. Farutin, H. Wu, W.-F. Hu, S. Rafa\"{\i}, P. Peyla, M.-C. Lai, and C. Misbah, ``Analytical study for swimmers in a channel,'' \emph{J. Fluid Mech.} \textbf{881}, 365--383 (2019), \href{https://doi.org/10.1017/jfm.2019.782}{doi:10.1017/jfm.2019.782}.
\bibitem{Scheibner2020} C. Scheibner, A. Souslov, A. Murugan, C. Riedel, M. Kruse, and V. Vitelli, ``Odd elasticity,'' \emph{Nat. Phys.} \textbf{16}, 475--480 (2020), \href{https://doi.org/10.1038/s41567-020-0795-y}{doi:10.1038/s41567-020-0795-y}.
\end{thebibliography}
\end{document}